\newtheorem{theorem}{Theorem}[section]
\newtheorem{prop}[theorem]{Proposition}
\newtheorem{cor}[theorem]{Corollary}
\theoremstyle{remark}
\newtheorem{remark}[theorem]{Remark}
\DeclareMathOperator{\Hom}{Hom}
\DeclareMathOperator{\Fun}{Fun}
\DeclareMathOperator{\Map}{Map}
\DeclareMathOperator{\Comod}{Comod}
\DeclareMathOperator{\Mod}{Mod}
\DeclareMathOperator{\m}{Mod}
\DeclareMathOperator{\qc}{Q}
\def\1bord{1\mathrm{Bord}}
\def\2bord{2\mathrm{Bord}}
\def\3bord{3\mathrm{Bord}}
\def\oo{\infty}
\newcommand{\ra}{\rightarrow}
\def\cA{\mathcal A}\def\cC{\mathcal C}\def\cD{\mathcal D}
\def\cF{\mathcal F}\def\cH{\mathcal H}
\def\cL{\mathcal L}
\def\cM{\mathcal M}
\def\cZ{\mathcal Z}
\def\CC{\mathbb C}
\def\ot{\otimes}
\def\ot{\otimes}
\def\XYX{X \times_Y X}
\def\wt{\widetilde}
\newcommand{\on}{\operatorname}
\newcommand{\Ind}{\on{Ind}}
\newcommand{\ti}{\times}
\newcommand{\Id}{{\rm id}}
\newcommand{\aff}{\it{aff}}
\newcommand{\bs}{\backslash}
\def\dash{\textendash}
\def\perf{\on{Perf}}
\def\2Cat{{\rm Cat}_{(\infty, 2)}}
\def\End{\on{End}}
\begin{document}

\title{Morita equivalence\\ for convolution
  categories: Appendix to \cite{BFN}} \author{David Ben-Zvi, John Francis, and David Nadler}

\address{Department of Mathematics\\University of Texas\\Austin, TX
  78712-0257} \email{benzvi@math.utexas.edu} \address{Department of
  Mathematics\\Northwestern University\\Evanston, IL 60208-2370}
\email{jnkf@math.northwestern.edu} \address{Department of
  Mathematics\\Northwestern University\\Evanston, IL 60208-2370}
\email{nadler@math.northwestern.edu}

\maketitle

\begin{abstract}
In this brief postscript to \cite{BFN}, we describe a Morita equivalence for derived, categorified
matrix algebras implied by theory \cite{1affine,preygel,indcoh} developed since the appearance of \cite{BFN}.
We work in the setting of perfect stacks $X$ and their stable $\oo$-categories
 $\qc(X)$  
of quasicoherent sheaves.
Perfect stacks include all varieties
and common stacks in characteristic zero, and their stable $\oo$-categories of sheaves
are well behaved refinements of their quasicoherent derived categories, satisfying natural analogues
of common properties of function spaces.

To a morphism of perfect stacks $\pi:X\to Y$, we associate the categorified matrix algebra
$\qc(\XYX)$ 
of sheaves on the derived fiber product equipped with its monoidal convolution product.
We show that for $\pi$
faithfully flat (as a corollary of the 1-affineness theorem of
Gaisgory~\cite{1affine}) or for $\pi$ proper and surjective and $X,Y$ smooth (as an
application of proper descent \cite{preygel,indcoh}), there is a
Morita equivalence between $\qc(X\times_Y X)$ and $\qc(Y)$, that is,
an equivalence of their $\oo$-categories of stable module $\oo$-categories. In
particular, this immediately implies an identification of their Drinfeld centers (as previously established
in  \cite{BFN}), and more generally, an identification of their
associated topological field theories. Another consequence is that for
an affine algebraic group $G$ in characteristic zero and an algebraic subgroup
$K$, passing to $K$-invariants induces an equivalence from
stable $\oo$-categories with algebraic $G$-action to modules for the Hecke category
$\qc(K\bs G/K)$.
\end{abstract}


\section{Introduction}

In this postscript to \cite{BFN}, we establish a derived, categorified
analogue of the following well-known observation. Let $X\to Y$ denote a
surjective map of finite sets, and consider the vector spaces $\CC[X], \CC[Y]$ of functions. 
Pullback of functions realizes $\CC[X]$ as a module over $\CC[Y]$ equipped with its usual commutative pointwise multiplication.

Consider the vector space $\CC[X\times_Y X]$ of functions on the fiber product. Concretely, it  forms the matrix algebra of block diagonal matrices with
indices labelled by $X$ and blocks labelled by $Y$.
It naturally acts on $\CC[X]$ by matrix multiplication, and $\CC[Y]$ is the  subalgebra of block scalar matrices.
The $\CC[Y]\dash\CC[X\times_Y X]$ bimodule $\CC[X]$ provides a Morita equivalence identifying the categories of $\CC[Y]$-modules
and $\CC[X \times_Y X]$-modules.

We will
establish an analogue of this Morita equivalence in which finite sets
are replaced by perfect stacks (for example, varieties or common stacks in characteristic zero) and
 vector spaces of functions by stable $\oo$-categories of quasicoherent
sheaves.

\subsection{Statements of Morita equivalence}

We will continue in the general  setting of \cite{BFN}, and only comment here specifically where we interface with it. 
In particular,  we will work with perfect stacks $X$ as introduced in \cite{BFN}
where the stable $\oo$-category $\qc(X)$ of quasicoherent sheaves  is
``generated by finite objects" (see also the
discussion in  \cite[Section 8]{dag11}). 
More precisely, a derived stack $X$ with
affine diagonal is said to be perfect if  $\qc(X)$ 
is the inductive limit
$$ \qc(X) \simeq \Ind\perf(X)
$$ of the full $\oo$-subcategory $\perf(X)$ of perfect complexes (objects locally equivalent to finite complexes of finite
vector bundles), or
equivalently, $\qc(X)$ is compactly generated (there is no right orthogonal to the compact objects), and compact
objects and dualizable objects coincide.

The class of
perfect stacks is very broad, and includes all (quasi-compact and
separated) schemes and common stacks in characteristic zero (in
particular, the quotient $X/G$ of a quasi-projective derived scheme $X$
by a linear action of an affine group $G$). It is also closed under
fiber products, passing to total spaces of quasiprojective morphisms,
and quotients by finite group schemes in characteristic zero.

The main technical result of \cite{BFN} is the following theorem.

\begin{theorem}[\cite{BFN}]\label{main BFN}
Let $\pi:X\to Y$ denote a morphism between perfect stacks. Then
$\qc(X)$ is canonically self-dual as a $\qc(Y)$-module, and there are
canonical equivalences of $\qc(X)$-bimodules
 $$
 \xymatrix{ \qc(X)\ot_{\qc(Y)} \qc(X) \ar[r]^-\sim & 
\qc(X\times_Y X) 
 \ar[r]^-\sim & 
\Hom_{\qc(Y)}
(\qc(X) , \qc(X))
}
 $$ 
  
Moreover,  the convolution monoidal structure on the middle
term is identified with the composition monoidal structure of the third term.
 \end{theorem}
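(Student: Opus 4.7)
The plan is to extract everything from two structural inputs: (a) self-duality of $\qc(X)$ as a $\qc(Y)$-module, and (b) a relative K\"unneth identification $\qc(X)\ot_{\qc(Y)} \qc(X)\simeq \qc(\XYX)$. Once these are in hand, the third equivalence with $\Hom_{\qc(Y)}(\qc(X),\qc(X))$ follows formally from tensor-hom adjunction in the closed symmetric monoidal $\oo$-category of presentable stable $\oo$-categories with continuous functors: any self-dual object $M$ satisfies $M \ot M \simeq M \ot M^\vee \simeq \Hom(M,M)$. The comparison of monoidal structures then reduces to tracking explicit kernels under this chain of equivalences.

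For (a), I would use the perfect hypothesis crucially: $\qc(X) = \Ind\perf(X)$ with perfect complexes dualizable, and similarly for $\XYX$. The duality data can be written down using the relative diagonal $\Delta : X \to \XYX$. Coevaluation should be the functor $\qc(Y) \to \qc(X)\ot_{\qc(Y)}\qc(X)$ given by $\cF \mapsto \Delta_* \pi^* \cF$, and evaluation the functor $\qc(X)\ot_{\qc(Y)}\qc(X) \to \qc(Y)$ given (after using (b)) by $(\cF,\cG)\mapsto \pi_*(\cF\ot_{\cO_X}\cG)$. Verifying the triangle identities amounts to the projection formula applied to $\Delta$ and to $\pi$, checkable on compact generators.

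The relative K\"unneth formula in (b) is the technical heart. I would first establish the absolute version $\qc(X\times Z)\simeq \qc(X)\ot \qc(Z)$ for perfect $X,Z$: both sides are compactly generated, so by the universal property of $\Ind$-completion it suffices to show that external product induces an equivalence $\perf(X)\ot \perf(Z)\simeq \perf(X\times Z)$, which is classical on affines and globalizes by the perfect hypothesis. To pass to the fiber product, I would realize $\XYX$ as a geometric realization of a \v{C}ech-style simplicial diagram over $Y$ and identify $\qc(X)\ot_{\qc(Y)}\qc(X)$ with the corresponding bar construction computing the relative tensor product; the two totalizations then match termwise by the absolute K\"unneth formula. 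The hard point, which is the main obstacle of the whole argument, is justifying base-change in this generality: one needs $\pi_*$ to commute with the relevant $\qc(Y)$-linear colimits and every pullback square among the perfect stacks in sight to satisfy the base-change formula, which is precisely what the perfectness hypothesis is designed to guarantee.

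Finally, for the monoidal comparison, I would present both structures as integral transforms. Convolution on $\qc(\XYX)$ via the three projections $p_{ij}: X\times_Y X\times_Y X \to \XYX$ is, by definition, the composition of correspondences, while composition in $\End_{\qc(Y)}(\qc(X))$ is just composition of $\qc(Y)$-linear functors. Under the equivalences established in (a) and (b), any continuous $\qc(Y)$-linear functor $\qc(X)\to\qc(X)$ is represented as the Fourier--Mukai transform with a kernel on $\XYX$, and the compatibility of the two compositions becomes a threefold base-change computation on $X\times_Y X\times_Y X$, using the same inputs as in (b). Unitality is checked against $\Delta_*\cO_X$, after which the full monoidal equivalence assembles formally.
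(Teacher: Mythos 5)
This paper does not actually prove Theorem \ref{main BFN}: it is quoted from \cite{BFN}, where the argument follows exactly the route you describe --- the absolute K\"unneth formula via compact generation by perfect complexes, the relative version via the bar construction matched against the \v{C}ech diagram using base change for cartesian squares of perfect stacks, self-duality with coevaluation $\Delta_*\pi^*$ and evaluation $\pi_*\Delta^*$ checked by the projection formula, and the identification of convolution with composition of integral transforms. Your sketch is an accurate reconstruction of that proof, including the correct diagnosis of base change and colimit-preservation of $\pi_*$ as the technical crux that the perfectness hypothesis is designed to supply.
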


\begin{remark}
For perfect stacks $X_1, X_2$ and a morphism $f:X_1\to X_2$  over the perfect stack $Y$,
the canonical self-duality of $\qc(X_1), \qc(X_2)$ over $\qc(Y)$ identifies the transpose of the pullback $f^*:\qc(X_2)\to \qc(X_1)$
with the pushforward $f_*:\qc(X_1)\to \qc(X_2)$ and vice versa. Thus not only is the
natural algebra structure on $\qc(X\times_Y X) $ compatible with that on 
$\Hom_{\qc(Y)}
(\qc(X) , \qc(X))$,
but also the  natural coalgebra structure on $\qc(X\times_Y X) $ is compatible with that on $\qc(X)\ot_{\qc(Y)} \qc(X)$.
\end{remark}

We will focus here on the module theory of the convolution
algebra $\qc(X\times_Y X)$, and its description in terms of the commutative algebra $\qc(Y)$. 
In general, when we speak about modules for an algebra object $\cA$ in a symmetric monoidal $\oo$-category $\cC$,
we mean $\cA$-module objects in $\cC$ and denote their $\oo$-category by $\Mod(\cA)$.  In particular, we regard the monoidal $\oo$-categories $\qc(Y)$,  $\qc(X\times_Y X)$ as algebra objects in the symmetric monoidal $\oo$-category $St$ of stable presentable $\oo$-categories.  

In \cite{BFN}, we constructed (under the hypothesis that
descent holds for $\pi$) an equivalence of  Drinfeld centers (or
Hochschild cohomology categories) 
$$
\xymatrix{
\cZ(\qc(Y)) \simeq \qc(\cL Y)\simeq \cZ(\qc(X\ti_Y X)).
}$$
where $\cL Y = \Map(S^1, Y)$ is the derived loop space of $Y$. 
Under appropriate assumptions, we will upgrade this here to a Morita equivalence between the two algebras.

\begin{theorem}\label{main theorem}
Let $\pi:X\to Y$ denote a morphism of perfect stacks.  If either
\begin{enumerate}
\item $\pi$ is faithfully flat, or 
\item $X,Y$ are smooth and $\pi$ is proper and surjective,
\end{enumerate}
then the $\qc(Y)\dash\qc(X\times_Y X)$-bimodule $\qc(X)$ defines an equivalence
$$
\xymatrix{
\qc(X) \otimes_{\qc(Y)} (-):\Mod(\qc(Y)) \ar[r]^-\sim &  \Mod(\qc(X\times_Y X))
}$$ 
of $\oo$-categories of stable presentable module categories.
\end{theorem}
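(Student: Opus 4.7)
The plan is to recognize the statement as a Morita equivalence in the symmetric monoidal $\oo$-category $St$ of stable presentable $\oo$-categories, and to verify the Morita criterion using the descent results cited in the hypotheses. By Theorem \ref{main BFN}, $\qc(X)$ is canonically a $(\qc(Y),\qc(X\ti_Y X))$-bimodule in $St$, self-dual over $\qc(Y)$, with $\End_{\qc(Y)}(\qc(X))\simeq \qc(X\ti_Y X)$ as algebras under the convolution product. The Morita functor $L(\cN)=\qc(X)\otimes_{\qc(Y)}\cN$ admits a right adjoint which, via self-duality, can be written $R(\cM)=\qc(X)\otimes_{\qc(X\ti_Y X)}\cM$.

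The counit of this adjunction is always an equivalence, directly from Theorem \ref{main BFN}:
$$LR(\cM)\simeq \qc(X)\otimes_{\qc(Y)}\qc(X)\otimes_{\qc(X\ti_Y X)}\cM \simeq \qc(X\ti_Y X)\otimes_{\qc(X\ti_Y X)}\cM \simeq \cM.$$
Hence the whole content lies in showing the unit $\Id \to RL$ is an equivalence. By tensor-continuity this reduces to the single identification
$$\qc(X)\otimes_{\qc(X\ti_Y X)}\qc(X)\simeq \qc(Y).$$
Computing the left-hand side via the two-sided bar complex and applying Theorem \ref{main BFN} iteratively to identify the iterated tensor powers of $\qc(X)$ over $\qc(Y)$ with $\qc$ of the Čech fiber powers of $\pi$, the required identification translates into $\qc(Y)$ being the Amitsur totalization $\Tot_\bullet \qc(X^{\ti_Y(\bullet+1)})$ of the Čech cosimplicial diagram of $\pi$ — that is, to $\pi$ being of effective descent for $\qc$.

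Descent is then supplied by the two cited ingredients. In Case (1), $\pi$ faithfully flat, Gaitsgory's 1-affineness theorem \cite{1affine} ensures that $\qc$-module categories on perfect stacks satisfy fpqc descent; specializing the descent from $\Mod_{\qc(-)}(St)$ to the algebra $\qc(-)$ itself (using that each term of the Čech nerve $X^{\ti_Y(n+1)}$ remains perfect) delivers the required Čech identification. In Case (2), $\pi$ proper and surjective with $X,Y$ smooth, proper descent for ind-coherent sheaves \cite{preygel, indcoh} supplies the analogous totalization for $\mathrm{IndCoh}$; smoothness forces $\qc\simeq\mathrm{IndCoh}$ on the Čech nerve, yielding the corresponding descent for $\qc$.

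The main technical obstacle lies in the middle step: matching the two-sided bar simplicial object computing $\qc(X)\otimes_{\qc(X\ti_Y X)}\qc(X)$ against the Amitsur/Čech cosimplicial totalization of $\qc(X^{\ti_Y(\bullet+1)})$. This requires combining Theorem \ref{main BFN} (to identify the terms of the bar) with the adjointability of pullback–pushforward along $\pi$ and its iterates (to pass between colimits and limits of the Čech-type diagrams in $St$), precisely the properties guaranteed by flatness in Case (1) and properness-plus-smoothness in Case (2). Once this matching is in place, the descent identifications complete the proof of Morita equivalence.
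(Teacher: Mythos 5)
Your overall strategy is a genuinely different one from the paper's: you check the unit and counit of the Morita adjunction directly, whereas the paper runs the comonadic Barr--Beck theorem on the adjunction $\Pi^*\dashv\Pi_*$ between $\Mod(\qc(Y))$ and $\Mod(\qc(X))$ and then identifies comodules over the resulting dualizable comonad with modules over its dual algebra $\qc(X\times_Y X)$. Your first two steps are sound: given Theorem \ref{main BFN}, the counit is an equivalence, and the unit question does reduce to the single identification $\qc(X)\otimes_{\qc(X\times_Y X)}\qc(X)\simeq\qc(Y)$. (Given that the counit is invertible, this identification is in fact equivalent to conservativity of $\qc(X)\otimes_{\qc(Y)}(-)$, which is exactly the condition the paper isolates in Proposition \ref{main taut} and then verifies geometrically.)

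The gap is in your proof of that key identification. First, the bar complex computing $\qc(X)\otimes_{\qc(X\times_Y X)}\qc(X)$ is not the \v{C}ech conerve of $\pi$: its $n$-simplices are $\qc(X)\otimes_{\qc(Y)}\qc(X\times_Y X)^{\otimes_{\qc(Y)}n}\otimes_{\qc(Y)}\qc(X)\simeq\qc(X^{\times_Y(2n+2)})$ (the \v{C}ech nerve of $X\times_Y X\to Y$, not of $\pi$), and its face maps are convolution functors (pull--push along partial diagonals and projections), not the pushforwards along \v{C}ech structure maps. Converting this geometric realization into an Amitsur-type totalization requires passing to right adjoints of these convolution functors and identifying the resulting cosimplicial diagram with a descent diagram; this is precisely the step you flag as ``the main technical obstacle'' and do not carry out, and it is where all the content lies. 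Second, and more concretely fatal, in case (2) your claim that ``smoothness forces $\qc\simeq\mathrm{IndCoh}$ on the \v{C}ech nerve'' is false: smoothness of $X$ and $Y$ does not make the derived fiber powers $X^{\times_Y(n+1)}$ smooth (the Steinberg variety $\wt G\times_{G/G}\wt G$ is the motivating example of a singular such fiber product), so proper descent for $\qc^!$ does not transfer to $\qc$ termwise along the \v{C}ech nerve. The paper circumvents exactly this by using only the monadic, two-term form of proper descent --- the monad $T=\pi^!\pi_*$ acts on $\qc^!(X)\simeq\qc(X)$ and recovers $\qc^!(Y)\simeq\qc(Y)$ as $\Mod_T(\qc(X))$, so that $\qc\simeq\qc^!$ is only ever invoked on $X$ and $Y$ themselves; conservativity of $\Pi^*$ then follows from Proposition \ref{algebras and monads}, which exhibits $\cM\mapsto\Mod_T(\Pi^*\cM)\simeq\cM$ as a retraction. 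You would need a similar restructuring (or an independent argument for the bar-to-\v{C}ech comparison avoiding the singular fiber powers) to complete your route.
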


The proof is a straightforward application of Lurie's Barr-Beck Theorem.
An evident necessary condition is that  the functor
$\qc(X) \otimes_{\qc(Y)} (-)$ is conservative.
The stated assumptions provide common geometric settings where this can be seen by appealing to established theory.

First, the flat Morita
equivalence is a categorified analogue of faithfully flat descent for
quasicoherent sheaves (due to Lurie \cite{dag8,dag11} in the derived
setting). Here the key input is the theorem of Gaitsgory
\cite{1affine} that many geometric stacks $X$ (including perfect
stacks) are {\em 1-affine} in the sense that localization and global sections
define inverse equivalences between $\qc(X)$-modules and quasicoherent
stacks of categories over $X$.

Second, the strange hybrid setting for the proper Morita equivalence results from combining two inputs. 
On the one hand, there is
a general descent theorem for proper morphisms due to Preygel and
Gaitsgory (see \cite[Proposition A.2.8]{preygel} and
\cite[7.2.2]{indcoh}) which applies not to $\qc(X)$ but to its ``dual counterpart"
 the $\oo$-category $\qc^!(X)=\Ind(\on{Coh}(X))$ of {\em
  ind-coherent sheaves}. Just as quasicoherent sheaves are a categorified analogue of
functions or cohomology, ind-coherent sheaves are a categorified analogue of
measures or homology (many of their beautiful properties
are developed in \cite{indcoh}). However, the analogue of Theorem
\ref{main BFN} {\em does not hold} for ind-coherent sheaves even in
the restricted setting of Theorem \ref{main theorem}(2) (though it
holds in the ``absolute'' case when $Y=pt$), and proper Morita equivalence for $\qc^!$ also fails.

On the other hand, quasicoherent and ind-coherent sheaves coincide if and only if the geometric stack is smooth.
Thus our assumption that $X,Y$ are smooth  implies that we have proper
descent for quasicoherent sheaves. Moreover, while usual
descent data
is given by a coalgebra (or comonad), proper descent uses an opposite
adjunction and hence is described by an algebra (monad). Thus we can deduce proper Morita equivalence 
from general facts about modules over algebras.

\subsection{Applications}

First, we mention some relations to topological field theory.  Recall
\cite{jacob TFT} the Morita $(\oo, 2)$-category $2Alg$ of algebra
objects in stable presentable $\oo$-categories $St$ with
$1$-morphisms given by bimodules, and $2$-morphisms given by natural
transformations.

\begin{cor} 
Let $\pi:X\to Y$ satisfy either hypotheses (1) or (2) of
Theorem \ref{main theorem}, and $\cH=\qc(X \times_Y X)$ denote the resulting convolution
 category.

\begin{enumerate}
\item  As an object of
  $2Alg_k$,  the convolution category $\cH$ is equivalent to $\qc(Y)$, and hence it is 2-dualizable and admits a canonical $O(2)$-fixed
  structure.

\item The  two-dimensional unoriented topological field
  theory corresponding to $\cH$ is equivalent to that corresponding to $\qc(Y)$.

\item The Hochschild homology and cohomology of $\cH$ are canonically
  identified as 
  $$HH_*(\cH)\simeq HH^*(\cH)\simeq \qc(\cL Y).
  $$

\end{enumerate}
\end{cor}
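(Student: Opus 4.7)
The plan is to deduce all three parts as essentially formal consequences of Theorem \ref{main theorem}, exploiting the fact that invertible bimodules are precisely the $1$-equivalences in the Morita $(\oo,2)$-category $2Alg_k$, together with standard Morita-invariance principles.

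For part (1), Theorem \ref{main theorem} exhibits the $\qc(Y)\dash\cH$-bimodule $\qc(X)$ as invertible in $2Alg_k$: its inverse is constructed from the self-duality of $\qc(X)$ over $\qc(Y)$ provided by Theorem \ref{main BFN}. Since invertible bimodules are precisely the $1$-equivalences in $2Alg_k$, we obtain $\cH \simeq \qc(Y)$ there. Because 2-dualizability and existence of an $O(2)$-fixed structure are invariants of equivalence, it suffices to verify them for $\qc(Y)$. For a perfect stack $Y$, Theorem \ref{main BFN} applied with $X = Y$ yields a self-duality of $\qc(Y)$ as a $\qc(Y \times Y)$-module, equipping it with the structure of a commutative Frobenius/Calabi--Yau algebra in $2Alg_k$; this simultaneously witnesses its 2-dualizability and the canonical $O(2)$-fixed lift.

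For part (2), Lurie's cobordism hypothesis, in its unoriented two-dimensional form, identifies 2D unoriented TFTs valued in $2Alg_k$ with $O(2)$-fixed 2-dualizable objects thereof, functorially in equivalences. Applying this to the equivalence of (1) yields the equivalence of TFTs. For part (3), Hochschild homology and cohomology of algebra objects in $St$ are Morita invariants, so (1) immediately yields $HH_*(\cH) \simeq HH_*(\qc(Y))$ and $HH^*(\cH) \simeq HH^*(\qc(Y))$. For the symmetric monoidal algebra $\qc(Y)$ the two invariants coincide, both being computed by the value on $S^1$ of the associated TFT. The identification with $\qc(\cL Y)$ then follows by applying Theorem \ref{main BFN} to the diagonal $\Delta : Y \to Y \times Y$ (a morphism of perfect stacks whose fiber product is the derived loop space):
$$
HH_*(\qc(Y)) \simeq \qc(Y) \otimes_{\qc(Y \times Y)} \qc(Y) \simeq \qc(Y \times_{Y \times Y} Y) \simeq \qc(\cL Y).
$$

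The main delicacy is in (1), namely checking that the natural self-pairing on $\qc(Y)$ coming from Theorem \ref{main BFN} really produces an $O(2)$-fixed (and not merely $SO(2)$-fixed) structure in $2Alg_k$; once this is in hand, the remaining assertions are routine consequences of Morita formalism and the cobordism hypothesis applied to the equivalence of (1).
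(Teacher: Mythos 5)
Your proposal is correct and follows essentially the same route as the paper: reduce everything to $\qc(Y)$ via the Morita equivalence of Theorem~\ref{main theorem} regarded as an equivalence in $2Alg$, then deduce 2-dualizability from commutativity and identify the Hochschild invariants with $\qc(\cL Y)$ as in \cite{BFN}. The paper's own (very brief) proof is no more explicit on the $O(2)$- versus $SO(2)$-fixed-structure point you rightly flag, offering only the $SO(2)$-invariant trace $\cF\mapsto\Hom(1,\cF)$ and a reference to \cite{character}.
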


\begin{proof} Theorem \ref{main theorem}
provides an equivalence between $\cH$ and $\qc(Y)$ as objects of
$2Alg$, so the remaining assertions follow from the corresponding assertions
for $\qc(Y)$ itself. The 2-dualizability of $\qc(Y)$ follows from the
commutativity, or can be seen from the more general criterion for
$SO(2)$-fixed monoidal categories of \cite{character}. In particular,
the canonical $SO(2)$-invariant trace is given by the right adjoint to
the unit morphism $\cF\mapsto \Hom(1,\cF)$.
\end{proof}

Now let 
$G$ be an affine algebraic group, and
 $X = pt$ a point,
and $Y=BG$ the classifying stack of $G$, so that then $G = pt \times_{BG} pt$.

Define the {\em
  quasicoherent group algebra} of $G$ to be the $\oo$-category
$\qc(G)$ equipped with its monoidal structure coming from convolution along
the group multiplication, or equivalently, the natural convolution product on $\qc(pt \times_{BG} pt)$.
Define a {\em
  quasicoherent $G$-category} to a $\qc(G)$-module
category. 

As a specific instance of Gaitsgory's general results~\cite{1affine}, the classifying stack
$BG$ is {\em 1-affine}: 
 localization and global sections
define inverse equivalences between
$\qc(BG)$-modules and quasicoherent stacks of
categories over $BG$. As a consequence, 
taking $\qc(G)$-invariants gives an equivalence from  quasicoherent $G$-categories
to  $\qc(BG)$-modules.  This is a categorified analogue
of the Koszul duality theorem of Goresky-Kottwitz-MacPherson
\cite{GKM} that provides a derived Morita equivalence (under certain
auxiliary conditions) between the ``topological group algebra"
$C_*(G)$ and equivariant cochain complex $C^*(BG)$. Note this is in
marked distinction with ordinary representation theory of a finite or
reductive group, where passage to $G$-invariants is essentially never
an equivalence.

The flat Morita equivalence (itself an application of ~\cite{1affine}) gives the following generalization. For an algebraic subgroup $K\subset
G$, consider the double coset stack $K\bs G/K=BK\times_{BG} BK$
and the resulting Hecke algebra $\qc(K\bs
G/K)$. By Theorem~\ref{main theorem}, such algebras are all
Morita equivalent to $\qc(BG)$, and hence to each other. Tracing through
the constructions, the Morita functor from quasicoherent $G$-categories to
Hecke modules is given by passage to $K$-equivariant objects.

\begin{cor} 
For affine algebraic groups $K\subset G$, there is a canonical equivalence 
$$
\xymatrix{
(-)^K:\Mod(\qc(G))\ar[r]^-\sim &  \Mod(\qc(K\bs G/K))
}$$ which sends a
quasicoherent $G$-category $\cM$ to the $\oo$-category of
$K$-equivariant objects $$
\xymatrix{
\cM^K\simeq \Hom_{\qc(G)}(\qc(G/K),\cM)
}$$
\end{cor}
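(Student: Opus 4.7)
The approach is to apply Theorem~\ref{main theorem}(1) to the morphism $\pi: BK \to BG$ induced by the inclusion $K \subset G$, and to combine the resulting Morita equivalence with the identification of quasicoherent $G$-categories with $\qc(BG)$-modules recalled in the paragraph preceding the corollary.

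First I would verify that $\pi: BK \to BG$ satisfies the hypotheses of Theorem~\ref{main theorem}(1): both $BK$ and $BG$ are perfect stacks as classifying stacks of affine algebraic groups, and $\pi$ is faithfully flat since its geometric fiber $G/K$ is a faithfully flat cover of a point. Using the identification $BK \times_{BG} BK \simeq K \bs G/K$, Theorem~\ref{main theorem}(1) yields the Morita equivalence
$$
\xymatrix{
\qc(BK) \otimes_{\qc(BG)} (-) : \Mod(\qc(BG)) \ar[r]^-\sim & \Mod(\qc(K \bs G/K)).
}
$$

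Second, I would compose this with the inverse of the equivalence $(-)^G: \Mod(\qc(G)) \weq \Mod(\qc(BG))$ arising from $1$-affineness of $BG$ (equivalently, from Theorem~\ref{main theorem}(1) applied to the faithfully flat morphism $pt \to BG$). The resulting equivalence $\Mod(\qc(G)) \weq \Mod(\qc(K \bs G/K))$ is itself a Morita equivalence, hence implemented by a $\qc(G)$-$\qc(K \bs G/K)$-bimodule. Since composition of Morita equivalences corresponds to tensor product of bimodules, and using the bimodule form of Theorem~\ref{main BFN} applied to the base change square for $pt \to BG \leftarrow BK$, this composed bimodule is
$$
\qc(BK) \otimes_{\qc(BG)} \qc(pt) \simeq \qc(BK \times_{BG} pt) \simeq \qc(G/K).
$$
Standard Morita theory then identifies the resulting forward functor $\Mod(\qc(G)) \weq \Mod(\qc(K \bs G/K))$ with $\cM \mapsto \Hom_{\qc(G)}(\qc(G/K), \cM) = \cM^K$, matching the formula in the statement.

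The main step requiring care is the bookkeeping of bimodule structures through these compositions, in particular verifying that the left $\qc(G)$- and right $\qc(K \bs G/K)$-actions on $\qc(G/K)$ arising from the fiber product description $G/K = pt \times_{BG} BK$ agree with the natural convolution actions coming from the geometry of $G/K$ as a $G$-space with commuting Hecke symmetry. Once this identification is in hand, the conclusion follows formally from two applications of Theorem~\ref{main theorem}(1) together with the bimodule version of Theorem~\ref{main BFN}.
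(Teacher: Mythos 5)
Your proposal is correct and follows essentially the same route as the paper: the paper likewise obtains the corollary by combining the flat Morita equivalence (Theorem~\ref{main theorem}(1)) for $BK\to BG$ with the identification of quasicoherent $G$-categories and $\qc(BG)$-modules coming from $1$-affineness of $BG$, and then ``tracing through the constructions'' to see the composite is $(-)^K$. Your explicit identification of the composite bimodule as $\qc(pt\times_{BG}BK)\simeq\qc(G/K)$ via the base-change form of Theorem~\ref{main BFN} is exactly the content of that tracing step, just spelled out in more detail than the paper provides.
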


The special setting (a proper map between smooth stacks) of the proper
Morita equivalence has an important instance: the
Grothendieck-Springer simultaneous resolution 
$$\xymatrix{
\mu:\wt{G}/G\ar[r] &  G/G
}$$ of the adjoint quotient of a 
reductive group. The corresponding fiber product is the adjoint quotient of the Steinberg variety
$${\mathcal St}/G = \wt G/G \times_{G/G} \wt G/G \simeq (\wt G \times_G \wt G)/G$$

\begin{cor} There is a canonical equivalence
$$
\Mod(\qc({\mathcal St}/G) )\simeq \Mod(\qc(G/G))
$$ 
of objects of $2Alg$
The corresponding 
unoriented two-dimensional topological field theory valued in $2Alg$
assigns to the circle the stable $\oo$-category
$$\qc(\cL (G/G))=\qc(BG^{T^2})=\qc({\mathcal Loc}_G(T^2))$$ 
of quasicoherent sheaves on the
derived stack of $G$-local systems on the two-torus $T^2$.
\end{cor}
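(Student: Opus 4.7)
The plan is to realize this corollary as the instance of Theorem \ref{main theorem}(2) applied to the Grothendieck-Springer map $\mu: \wt{G}/G \to G/G$. Once that is set up, the Morita equivalence is immediate from the theorem, and the TFT statement follows from parts (2) and (3) of the preceding corollary together with a routine unfolding of iterated derived loop spaces.

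First one verifies the hypotheses of Theorem \ref{main theorem}(2) for $\mu$. Both stacks are perfect, being quotients of smooth varieties by linear actions of affine algebraic groups in characteristic zero. For smoothness, $G/G$ is evidently smooth, while the identification $\wt{G} \simeq G \times^B B$ gives $\wt{G}/G \simeq B/B$, also smooth. Properness of $\mu$ reduces along the atlas $G \to G/G$ to properness of the classical Grothendieck-Springer map $\wt{G} \to G$, whose fibers are closed in the flag variety $G/B$. Surjectivity of $\mu$ is the classical fact that every element of a reductive group in characteristic zero is contained in some Borel subgroup. The fiber product identification
$$\wt{G}/G \times_{G/G} \wt{G}/G \simeq (\wt{G} \times_G \wt{G})/G = {\mathcal St}/G$$
follows from the standard formula $[X/G] \times_{[Z/G]} [Y/G] \simeq [(X \times_Z Y)/G]$ for quotients by a common $G$-action, and then Theorem \ref{main theorem}(2) directly produces the Morita equivalence in $2Alg$.

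For the TFT statement, part (2) of the preceding corollary identifies the two-dimensional unoriented TFT of $\qc({\mathcal St}/G)$ with that of $\qc(G/G)$, while part (3) computes its value on $S^1$ as $\qc(\cL(G/G))$. The iterated derived loop space is then unwound using $\cL(BG) \simeq G/G$ (the inertia stack of $BG$) together with the mapping-stack adjunction $\Map(S^1, \Map(S^1, -)) \simeq \Map(S^1 \times S^1, -)$, which gives
$$\cL(G/G) \simeq \cL\cL(BG) \simeq \Map(T^2, BG) = BG^{T^2} = {\mathcal Loc}_G(T^2),$$
the stated identification.

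The main obstacle is purely geometric: checking the smoothness, properness, and surjectivity hypotheses for the Grothendieck-Springer map in the derived-stack sense required by Theorem \ref{main theorem}(2). These hold classically after passing to underlying stacks, but a careful writeup should note that $\wt{G}/G$ and $G/G$ are both classical smooth stacks, so no derived corrections intervene at the level of source, target, or the stated fiber product. Once these geometric inputs are in hand, both the Morita equivalence and the description of the circle invariant are formal consequences of the quoted results.
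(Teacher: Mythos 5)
Your proposal is correct and follows exactly the route the paper intends: the corollary is stated as the instance of Theorem \ref{main theorem}(2) applied to the Grothendieck--Springer map (the paper gives no separate proof, having already noted the smooth/proper/surjective setting and the Steinberg fiber product identification), with the TFT and loop-space statements imported from the preceding corollary and the standard identifications $\cL(BG)\simeq G/G$ and $\cL\cL(BG)\simeq \Map(T^2,BG)$. Your explicit verification of the geometric hypotheses and the remark that all stacks involved are classical and smooth is a useful amplification of what the paper leaves implicit.
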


\begin{remark}
A primary motivation for the consideration of $\qc({\mathcal St}/G)$
comes from its close relation to the affine Hecke category  $\cH^{\aff}$ refining the affine Hecke algebra.
Namely, 
 a theorem
of Bezrukavnikov  \cite{Roma ICM} describes  $\cH^{\aff}$ (or its ind-complete version) in terms of $G$-equivariant coherent (or ind-coherent)
sheaves on the Steinberg stack ${\mathcal St}/G$. 

From this perspective, the study of $\cH^{\aff}$ is subtle 
in part due to the failure of Morita equivalence: in fact, the natural action of $\cH^{\aff}$ on the standard module $\qc(\wt G/G)$
factors through the endofunctors $\qc({\mathcal St}/G)$.
The Morita equivalence of the above corollary can be viewed as an
 incomplete categorification of the
Kazhdan-Lusztig description of modules over the affine Hecke algebra.
It successfully describes $\cH^{\aff}$-modules accessible via the standard module $\qc(\wt G/G)$.

A central problem going forward is to extend this picture to the full affine Hecke
category $\cH^{\aff}$. For example, one might expect the Hochschild homology of $\cH^{\aff}$ to involve those ind-coherent sheaves with nilpotent singular support  on the derived stack of 
$G$-local systems on the two-torus $T^2$
(as appears in the refined geometric
Langlands conjecture of \cite{AG}).

\end{remark}

\subsection{Acknowledgements} 
We would like to thank Andrew Blumberg, Dennis Gaitsgory and Jacob
Lurie for helpful remarks. We would also like to thank Dennis Gaitsgory for 
sharing the preprint \cite{1affine} with us.


\section{Morita equivalence via Barr-Beck Theorem}

Let $\pi:X\to Y$ be a morphism of perfect stacks.

We have the standard adjunction
$$
\xymatrix{
\pi^*:\qc(Y)\ar@{<->}[r] &  \qc(X):\pi_*
}
$$ 
The pullback $\pi^*$ is monoidal, and
$\qc(X)$ is canonically self-dual as a $\qc(Y)$-module (as stated in Theorem~\ref{main BFN} and proved in \cite{BFN}).  

Consider
the induced adjunction
$$\xymatrix{
\Pi^*:\Mod(\qc(Y))\ar@{<->}[r] & \Mod(\qc(X)):\Pi_*
}
$$ where the
pullback $\Pi^*$ is given by 
$$
\xymatrix{
\Pi^*(-) = \qc(X)\ot_{\qc(Y)}(-)
}
$$
and the pushforward $\Pi_*$ is the forgetful functor given by restricting
$\qc(X)$-modules to $\qc(Y)$-modules along $\pi^*$.  

By construction, the pullback $\Pi^*$ preserves colimits, and since $\qc(X)$ is dualizable over $\qc(Y)$, it also preserves limits.

Now consider the convolution algebra 
$$
\xymatrix{
\qc(X \times_Y X) \simeq \End_{\qc(Y)}(\qc(X))
}$$

\begin{prop}\label{main taut}
Assume $\Pi^*$ is conservative.

Then the functor
  $$
  \xymatrix{
  \qc(X)\ot_{\qc(Y)}(-): \Mod(\qc(Y))\ar[r] & 
  \Mod(\qc(X \times_Y X) )
  }
  $$
  is an equivalence.
  \end{prop}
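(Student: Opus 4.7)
The plan is to apply Lurie's Barr-Beck-Lurie theorem in its comonadic form to the adjunction $\Pi^*\dashv\Pi_*$, and then to reinterpret the resulting comodule category as modules over the convolution algebra $\qc(X\times_Y X)$ using the self-duality supplied by Theorem~\ref{main BFN}.

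First, I verify the comonadicity hypotheses. By assumption $\Pi^*$ is conservative; it preserves colimits as a left adjoint, and preserves limits as well, because Theorem~\ref{main BFN} shows $\qc(X)$ is dualizable over $\qc(Y)$, so $\Pi^*$ is itself a right adjoint. In particular $\Pi^*$ preserves totalizations of $\Pi^*$-split cosimplicial objects, so Barr-Beck-Lurie gives an equivalence
$$\Mod(\qc(Y))\xrightarrow{\sim}\mathrm{coMod}_T(\Mod(\qc(X)))$$
for the comonad $T=\Pi^*\Pi_*$. A direct computation rewrites
$$T(\cM)=\qc(X)\otimes_{\qc(Y)}\cM\simeq\qc(X\times_Y X)\otimes_{\qc(X)}\cM,$$
so $T$ is tensoring with the $\qc(X)$-bimodule $\qc(X\times_Y X)$ carrying its natural \v{C}ech-nerve coalgebra structure.

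The final and crucial step is to convert comodules over this coalgebra into modules over the convolution algebra. By the self-duality of $\qc(X)$ over $\qc(Y)$, the $\qc(X)$-bimodule $\qc(X\times_Y X)=\qc(X)\otimes_{\qc(Y)}\qc(X)$ is dualizable, and the remark following Theorem~\ref{main BFN} identifies, under this duality, its coalgebra structure with the convolution algebra structure on $\qc(X\times_Y X)\simeq\End_{\qc(Y)}(\qc(X))$. The standard equivalence between comodules over a dualizable coalgebra and modules over its dual algebra then yields $\mathrm{coMod}_T(\Mod(\qc(X)))\simeq\Mod(\qc(X\times_Y X))$, completing the identification. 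The main technical obstacle is this last step: the bookkeeping needed to match a $T$-comodule in $\Mod(\qc(X))$ with a $\qc(X\times_Y X)$-module in $St$ ultimately rests on Theorem~\ref{main BFN} and its accompanying remark, which together package exactly the algebra/coalgebra compatibility we need.
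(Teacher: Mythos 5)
Your proposal is correct and follows essentially the same route as the paper: the comonadic Barr--Beck--Lurie theorem applied to $\Pi^*\dashv\Pi_*$ (with conservativity assumed and limit-preservation from dualizability of $\qc(X)$ over $\qc(Y)$), followed by identifying the comonad with the dualizable coalgebra $\qc(X)\otimes_{\qc(Y)}\qc(X)$ in $\qc(X)$-bimodules and passing to modules over its dual algebra $\End_{\qc(Y)}(\qc(X))\simeq\qc(X\times_Y X)$ via the self-duality of Theorem~\ref{main BFN}. No substantive differences.
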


\begin{proof}
Applying the comonadic form of Lurie's Barr-Beck Theorem to the adjunction
$$\xymatrix{
\Pi^*:\Mod(\qc(Y))\ar@{<->}[r] & \Mod(\qc(X)):\Pi_*
}
$$
we have an equivalence
$$
\xymatrix{
  \qc(X)\ot_{\qc(Y)}(-): \Mod(\qc(Y)) \ar[r]^-\sim & \Comod_{T^\vee}(\Mod(\qc(X)))
}
$$
where $T^\vee$ is the comonad of the adjunction with underlying endofunctor $\Pi^* \Pi_*$.

To calculate $T^\vee$, observe that it is represented by the coalgebra  object
$$
\xymatrix{
T^\vee(\qc(X)) \simeq \qc(X) \otimes_{\qc(Y)} \qc(X)
}
$$ 
inside of $\qc(X)$-bimodules.
The fact that  $\qc(X)$ is  dualizable over $\qc(Y)$ implies that 
$$
\xymatrix{
\qc(X) \otimes_{\qc(Y)} \qc(X)
\simeq
\qc(X \times_Y X)
}
$$
is  dualizable as a $\qc(X)$-bimodule. Moreover,  the canonical self-duality of $\qc(X)$  over $\qc(Y)$ provides
a canonical identification  of the $\qc(X)$-bimodule dual of  this coalgebra with the algebra object
$$
\xymatrix{
\End_{\qc(Y)} \qc(X) \simeq \qc(X \times_Y X)
}
$$
\end{proof}


Now we present two geometric contexts where the conservativity assumed in Proposition~\ref{main taut} can be readily verified.

\subsection{Flat Morita equivalence for perfect stacks}

\begin{prop}
Suppose $\pi:X\to Y$ is faithfully flat. Then $\Pi^*$ is conservative.
\end{prop}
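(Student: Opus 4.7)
The plan is to reduce conservativity of $\Pi^*$ to a categorified faithfully flat descent statement, using Gaitsgory's 1-affineness theorem as the bridge. Since $X$ and $Y$ are perfect stacks, they are 1-affine by \cite{1affine}: the global sections functor
\[
\Gamma_Y: \mathrm{ShvCat}(Y) \xrightarrow{\sim} \Mod(\qc(Y))
\]
is an equivalence from the $\infty$-category of quasicoherent sheaves of categories on $Y$ to the $\infty$-category of $\qc(Y)$-module categories, and likewise for $X$. Under these equivalences, and by the compatibility of localization with base change, the extension-of-scalars functor $\qc(X)\otimes_{\qc(Y)}(-)$ intertwines with the geometric pullback $\pi^*: \mathrm{ShvCat}(Y) \to \mathrm{ShvCat}(X)$. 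Consequently, conservativity of $\Pi^*$ reduces to conservativity of $\pi^*$ on $\mathrm{ShvCat}$.

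Next, I would invoke faithfully flat descent for the assignment $S \mapsto \mathrm{ShvCat}(S)$, which is part of the same circle of results in \cite{1affine}. Concretely, letting $X_\bullet \to Y$ denote the \v{C}ech nerve of $\pi$, descent yields an equivalence
\[
\mathrm{ShvCat}(Y) \xrightarrow{\sim} \lim_{[n] \in \Delta} \mathrm{ShvCat}(X_n),
\]
so any $\underline{\cM} \in \mathrm{ShvCat}(Y)$ is recovered as the totalization of its pullbacks $\underline{\cM}|_{X_n}$. In particular, if $\pi^*\underline{\cM} \simeq 0$, then each $\underline{\cM}|_{X_n}$ vanishes (as $X_n$ is an iterated fiber product of copies of $X$ over $Y$, to which the vanishing propagates), whence $\underline{\cM} \simeq 0$ and the corresponding module $\cM$ is zero. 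Note that since perfect stacks are closed under fiber products, each $X_n$ is itself a perfect stack, so 1-affineness applies at every level of the simplicial diagram.

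The content of the argument is concentrated entirely in the invocation of \cite{1affine}; once one has 1-affineness together with the attendant descent statement for $\mathrm{ShvCat}$, the reasoning is formal. The main obstacle is therefore organizational rather than mathematical: verifying that the various equivalences ($\Gamma$, $\pi^*$, and base change of module categories) fit together compatibly in the $\infty$-categorical setting, so that the translation from algebra to geometry actually identifies $\Pi^*$ with $\pi^*$ on sheaves of categories. No additional geometric input beyond what is cited in Theorem~\ref{main theorem}(1) is needed.
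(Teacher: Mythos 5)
Your reduction via 1-affineness is exactly the paper's first step: both arguments factor $\Pi^*$ as the composite of localization $\Mod(\qc(Y))\to \mathrm{ShvCat}(Y)$, pullback of quasicoherent stacks along $\pi$, and global sections $\mathrm{ShvCat}(X)\to\Mod(\qc(X))$, with the outer two functors equivalences by \cite{1affine}, so that everything comes down to conservativity of $\pi^*$ on sheaves of categories. Where you diverge is in that last step. The paper argues directly that $\pi$ has a section locally in the flat topology (after base change along $\pi$ itself, the projection $X\times_Y X\to X$ has the diagonal as a section), and pullback along that section furnishes a left inverse to pullback along $\pi$; you instead invoke the full \v{C}ech descent equivalence $\mathrm{ShvCat}(Y)\simeq \lim_{\Delta}\mathrm{ShvCat}(X_\bullet)$. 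Both rest on the same descent technology from \cite{1affine}; your version is a perfectly serviceable, if slightly heavier, substitute, and your observation that each $X_n$ is again perfect is a correct and worthwhile check.

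One point needs tightening. You verify only that $\pi^*$ detects zero objects: ``if $\pi^*\ul{\cM}\simeq 0$ then $\ul{\cM}\simeq 0$.'' But $\Mod(\qc(Y))$ is not a stable $\infty$-category --- its objects are module categories --- so detecting zero objects is strictly weaker than conservativity, and one cannot pass from one to the other by taking cofibers as one would for a functor of stable categories. The repair is immediate from the same descent equivalence: given a morphism $F:\ul{\cM}\to\ul{\cN}$ in $\mathrm{ShvCat}(Y)$ with $F|_X$ an equivalence, each $F|_{X_n}$ is an equivalence because every projection $X_n\to Y$ factors through $X$, and then $F\simeq \lim_{\Delta} F|_{X_\bullet}$ is a limit of equivalences, hence an equivalence. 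As written, however, the ``in particular'' does not deliver the conclusion stated in the proposition.
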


\begin{proof}
Observe that
$\Pi^*$ factors as a composition of the localization functor from $\qc(Y)$-modules to quasicoherent stacks on
$Y$, pullback of quasicoherent stacks along $\pi$, and the global
sections functor from quasicoherent stacks on $X$ to
$\qc(X)$-modules.

By the 1-affineness of $X$ and $Y$ established in \cite{1affine}, the first and third functors are
equivalences. 
Finally, the pullback of quasicoherent stacks along $\pi$ is
conservative since the map $\pi$ has a section locally in the flat
topology whose pullback provides a left inverse to pullback along $\pi$.
\end{proof}

\subsection{Proper Morita equivalence for smooth perfect stacks}

Now assume $\pi:X\to Y$ is a proper surjective morphism of perfect stacks.  

We have the adjunction
$$
\xymatrix{
\pi_*: \qc^!(X) \ar@{<->}[r] &  \qc^!(Y) : \pi^!
}
$$
Consider the induced monad $T$ with underlying endofunctor $\pi^!\pi_*$.
By the proper descent of \cite{preygel,indcoh},
we have an equivalence 
$$
\xymatrix{
\qc^!(Y)\simeq \Mod_T(\qc^!(X))
}$$

Now assume in addition that $X$ and $Y$ are smooth so that we have equivalences 
$$
\xymatrix{
\qc(X)\simeq \qc^!(X) &  \qc(Y)\simeq \qc^!(Y)
}
$$
Thus 
we have an equivalence 
$$
\xymatrix{
\qc(Y)\simeq \Mod_T(\qc(X))
}$$
where the monad $T$ is represented by an algebra object in $\End_{\qc(Y)}(\qc(X))$.

\begin{prop}
Suppose $X$ and $Y$ are smooth and $\pi:X\to Y$ is proper and surjective. Then $\Pi^*$ is conservative.
\end{prop}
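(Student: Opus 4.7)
The plan is to deduce the required conservativity directly from the monadic presentation $\qc(Y) \simeq \Mod_T(\qc(X))$ already established in the paragraphs preceding the statement (via Preygel--Gaitsgory proper descent on $\qc^!$ combined with the smoothness identification $\qc \simeq \qc^!$). The crucial point, already emphasized in the excerpt, is that Theorem~\ref{main BFN} lets us represent the monad $T$ by a genuine algebra object $A$ in the $\qc(Y)$-linear convolution category $\End_{\qc(Y)}(\qc(X)) \simeq \qc(X\times_Y X)$, so proper descent reads $\qc(Y) \simeq \Mod_A(\qc(X))$ as an equivalence of $\qc(Y)$-linear categories.

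The main step is then to tensor this equivalence on the right with an arbitrary $M \in \Mod(\qc(Y))$ over $\qc(Y)$. Since $A$ acts $\qc(Y)$-linearly, the formation of $A$-modules commutes with the base change $-\otimes_{\qc(Y)} M$, producing a natural chain of equivalences
$$M \simeq \qc(Y)\otimes_{\qc(Y)} M \simeq \Mod_A(\qc(X))\otimes_{\qc(Y)} M \simeq \Mod_A\bigl(\qc(X)\otimes_{\qc(Y)} M\bigr) = \Mod_A(\Pi^*(M)).$$
From this it is immediate that $\Pi^*(M)\simeq 0$ forces $M \simeq \Mod_A(0) \simeq 0$, which is the desired conservativity; Proposition~\ref{main taut} then upgrades this to the full Morita equivalence.

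The main obstacle is justifying the middle base-change identity $\Mod_A(\qc(X))\otimes_{\qc(Y)} M \simeq \Mod_A(\qc(X)\otimes_{\qc(Y)} M)$. I would argue for it by observing that because $A$ lives in the $\qc(Y)$-linear monoidal category $\qc(X\times_Y X)$, the Eilenberg--Moore functor $\Mod_A(-)$ is a $\qc(Y)$-linear construction on $\qc(X)$-module categories carrying an $A$-action, and the cobar totalization computing $\Mod_A$ commutes with the colimit-preserving base change $-\otimes_{\qc(Y)} M$ in the presentable setting. This is precisely the kind of ``general fact about modules over algebras'' in symmetric monoidal $\infty$-categories to which the introduction alludes, and once it is set up formally, the rest of the argument is essentially automatic.
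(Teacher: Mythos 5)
Your proposal is in substance the same argument as the paper's: the paper also produces a left inverse to $\Pi^*$ by passing to $T$-module objects, via the chain $\Mod_T(\Pi^*(\cM)) \simeq \Mod_T(\qc(X)) \otimes_{\qc(Y)} \cM \simeq \cM$. Two remarks. First, the base-change identity you single out as the main obstacle is exactly the paper's Proposition~\ref{algebras and monads} (${}_A\cC \otimes_\cC \cM \simeq {}_A\cM$, applied with $\cC = \qc(X\times_Y X)$); the paper proves it by verifying the monadic Barr--Beck conditions for the forgetful functor $G\otimes\Id$, not by commuting the cobar construction with base change --- and your sketched justification is shaky as stated, since $\Mod_A(-)$ is computed by a limit (totalization), which does not automatically commute with the colimit-preserving functor $-\otimes_{\qc(Y)}M$. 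Second, your final inference ``$\Pi^*(M)\simeq 0$ forces $M\simeq 0$, which is conservativity'' is not valid at this level: $\Mod(\qc(Y))$ is an $\infty$-category of module \emph{categories}, not a stable $\infty$-category, so reflecting zero objects does not imply conservativity. But this is easily repaired, and is how the paper phrases it: the natural equivalence $M \simeq \Mod_A(\Pi^*(M))$ you have already written down exhibits $\Mod_A(-)$ as a left inverse functor to $\Pi^*$, and any functor admitting a left inverse is conservative.
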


\begin{proof}
We appeal to Proposition~\ref{algebras and monads} below. Namely, we can produce a left inverse to the functor $\Pi^*$ by passing to $T$-module objects
$$
\xymatrix{
\Mod_T(\Pi^*(\cM)) \simeq \Mod_T(\qc(X)) \otimes_{\qc(Y)} \cM \simeq \cM
}$$
\end{proof}

\section{Algebras and Module Categories}
In this section, we describe some useful relations between categories of
modules over algebras in a monoidal category $\cC$ and $\cC$-module
categories. The results form a straightforward modification of
\cite[Proposition 4.1]{BFN} from symmetric monoidal to general
monoidal $\oo$-categories $\cC$. The proofs are included for the
convenience of the reader.

\begin{prop}\label{algebras and monads}
Let $\cC$ be a stable presentable monoidal $\infty$-category, and
$A\in \cC$ an associative algebra object.  For any $\cC$-module $\cM$,
there is a canonical equivalence of $\infty$-categories
$$
{}_A\cC \ot_\cC \cM \simeq {}_A\cM.
$$

\end{prop}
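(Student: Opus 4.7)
The plan is to identify both sides with the $\oo$-category of modules over a common monad on $\cM$, using Lurie's monadic Barr-Beck theorem. I would first observe that the standard free-forgetful adjunction $F : \cC \rightleftarrows {}_A\cC : U$ is an adjunction of right $\cC$-module $\oo$-categories, since both $F = A \otimes (-)$ and the forgetful $U$ respect the right $\cC$-action on ${}_A\cC$ by tensoring in $\cC$. Applying the base change $-\otimes_\cC \cM$ produces an adjunction $F_L \dashv U_L : \cM \rightleftarrows {}_A\cC \otimes_\cC \cM$, whose induced monad $U_L F_L$ on $\cM$ is canonically identified with $A \otimes (-)$ (now using the $\cC$-action on $\cM$). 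Separately, the tautological free-forgetful adjunction $F_R \dashv U_R : \cM \rightleftarrows {}_A\cM$ with $F_R(X) = A \otimes X$ exhibits ${}_A\cM$ as modules over this same monad. The $\cC$-balanced bifunctor ${}_A\cC \times \cM \to {}_A\cM$ sending $(M, X)$ to $M \otimes X$ with $A$-action induced from that on $M$ descends to a canonical comparison functor $\Phi : {}_A\cC \otimes_\cC \cM \to {}_A\cM$ that intertwines both adjunctions; in particular $U_R \circ \Phi \simeq U_L$.

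Given this setup, the proof reduces to verifying that the left adjunction $F_L \dashv U_L$ is monadic, so that both sides become canonically equivalent to $\Mod_{A \otimes (-)}(\cM)$ compatibly with $\Phi$. The right adjunction is monadic by definition. For the left, preservation of $U_L$-split simplicial colimits (indeed of all small colimits) by $U_L$ follows from the same property for $U : {}_A\cC \to \cC$ together with the fact that $-\otimes_\cC \cM$ carries colimit-preserving functors between presentable right $\cC$-module categories to colimit-preserving functors.

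The main obstacle I anticipate is the conservativity of $U_L$. I would address this via compact generation: the category ${}_A\cC$ is generated under colimits by the free modules $F(C)$ for $C$ in a generating set of $\cC$, and after base change, ${}_A\cC \otimes_\cC \cM$ is correspondingly generated under colimits by the objects $F_L(X)$ for $X$ in a generating set of $\cM$ (using $\cC$-balancing to rewrite $F(C) \otimes X$ as $F_L(C \otimes X)$). Since the adjunction gives $\Hom_{{}_A\cC \otimes_\cC \cM}(F_L(X), N) \simeq \Hom_\cM(X, U_L N)$, any object $N$ with $U_L N \simeq 0$ admits no nonzero maps from the generators and must vanish, establishing conservativity. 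Monadic Barr-Beck then yields both equivalences ${}_A\cC \otimes_\cC \cM \simeq \Mod_{A \otimes (-)}(\cM) \simeq {}_A\cM$ compatibly with $\Phi$, completing the proof.
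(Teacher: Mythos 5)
Your proof is correct and follows the same overall strategy as the paper: base-change the free--forgetful adjunction $\cC \rightleftarrows {}_A\cC$ along $-\ot_\cC \cM$, identify the resulting monad on $\cM$ with $A\ot(-)$, and conclude by the monadic Barr--Beck theorem once the base-changed forgetful functor is shown to preserve colimits and be conservative. The one place where you genuinely diverge is the conservativity step, which is the crux of the argument. The paper proceeds indirectly: it observes that restriction along $F_L$ induces a conservative map $\Fun^{\rm L}({}_A\cC\ot_\cC\cM,\cD)\to\Fun^{\rm L}(\cM,\cD)$ (i.e.\ the image of $F_L$ generates the target under colimits), passes to opposite categories to get the dual statement for right adjoints, and then invokes \cite[Lemma 4.2]{BFN} to transfer conservativity on functor categories to conservativity of $G\ot{\rm id}$ itself. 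You instead use the same generation statement directly: the objects $F_L(X)$ generate ${}_A\cC\ot_\cC\cM$ under colimits (via balancing $F(C)\ot X\simeq F_L(C\ot X)$), and the adjunction identity $\Hom(F_L(X),N)\simeq\Hom(X,U_LN)$ then forces $N\simeq 0$ whenever $U_LN\simeq 0$, since the full subcategory of objects with no maps to $N$ is closed under colimits. Your route is more elementary and self-contained (it avoids the auxiliary lemma about detecting conservativity on right-adjoint functors from test categories), at the cost of needing the stable/presentable closure argument to pass from ``no maps from generators'' to vanishing; note that despite your phrasing, no compactness of the generators is actually needed for this, only closure of the vanishing condition under colimits. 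Both arguments ultimately rest on the identical fact that free objects generate the relative tensor product under colimits, so the proofs are substantively equivalent.
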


\def\m{\mathrm{Mod}}


\begin{proof}


We will prove that ${}_A\cC\ot_\cC \cM$ is equivalent to
${}_A\cM$ by the natural evaluation functor.  Consider the
adjunction
$$
\xymatrix{\cC\ar@/^1pc/[r]^-{F}&{}_{A}\cC \ar[l]^-{G}
}
$$
where $F(-) =A\ot -$ is the induction, and $G$ is the forgetful functor.

The above adjunction induces an adjunction
$$
\xymatrix{\cM\ar@/^1pc/[r]^-{F\ot \Id}&{}_A\cC \ot_\cC \cM \ar[l]^-{G\ot\Id}
\ar[r] &\m_T(\cM)
}
$$ and thus a functor to modules over the monad $T=(G\ot{\rm
  id})\circ(F\ot{\rm id})$ acting on $\cM$.  The functor underlying
$T$ is given by tensoring with $A$, so we also have an equivalence
$\m_T(\cM)\simeq {}_A\cM$.

By its universal characterization, the functor $G'=G\ot {\rm id}$ is
colimit preserving.  We will now check that $G'$ is
conservative. It will follow that $G'$ satisfies the monadic
Barr-Beck conditions, and we obtain the desired equivalence ${}_A\cC
\ot_\cC \cM\simeq {}_A\cM$.

Observe that for any $\cD$, the pullback
$$
\Fun^{\rm L}({}_A\cC \ot_\cC \cM, \cD) \ra \Fun^{\rm L}(\cM, \cD)
$$ induced by the induction $F: \cM \to {}_A\cC \ot_\cC \cM$ is
conservative. In other words, if a functor out of ${}_A\cC \times \cM$
(which preserves colimits in each variable) is trivial when restricted
to $\cM$, then it is necessarily trivial.

Consequently, switching to opposite categories, we have that the corresponding functor
$$\Fun^{\rm R} (\cD, {}_A\cC \ot \cM) \ra \Fun^{\rm R} (\cD, \cM)
$$ induced by the forgetful functor $G': {}_A\cC \ot_\cC \cM \to \cM$
is conservative. Now we can apply \cite[Lemma 4.2]{BFN}, which asserts
that we can check conservativity of a colimit preserving right adjoint
$G$ on right adjoint functors from test categories $\cD$ (in fact the
lemma applies in a setting that is linear over some stable presentable
symmetric monoidal $\oo$-category).  This concludes the proof.

\end{proof}

\begin{prop}\label{algebra of algebras}
Let $\cC$ be a stable presentable monoidal $\infty$-category.

\begin{enumerate}

\item
For $A,A'\in \cC$ associative algebra objects, there is a canonical
equivalence of $\infty$-categories
$$ {}_A\cC_{A'} \simeq {}_A\cC\otimes_\cC \cC_{A'}.
$$

\item The $\infty$-category of modules ${}_A\cC$ is dualizable as a
  $\cC$-module with dual given by the $\infty$-category of modules
  $\cC_A$ over the opposite algebra. In particular for $A,A'$ algebra
  objects we have an equivalence between functors and bimodules
  $Hom_{\cC}(\cC_A,\cC_{A'})\simeq {}_A\cC_{A'}$.

\end{enumerate}

\end{prop}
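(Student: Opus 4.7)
For part (1), I would apply Proposition \ref{algebras and monads} to the $\cC$-module $\cM = \cC_{A'}$: the right $A'$-action on $\cC_{A'}$ commutes with the ambient left $\cC$-action, so $\cC_{A'}$ qualifies as a $\cC$-module in the relevant sense. The equivalence ${}_A\cC \otimes_\cC \cC_{A'} \simeq {}_A(\cC_{A'})$ furnished by that proposition then identifies the right-hand side with ${}_A\cC_{A'}$, because a left $A$-module structure on an object of $\cC_{A'}$ is exactly an $(A, A')$-bimodule structure in $\cC$.

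For part (2), the plan is to construct explicit duality data, using part (1) as input. The coevaluation would be the map
$$\cC \to {}_A\cC \otimes_\cC \cC_A \simeq {}_A\cC_A$$
(applying part (1) with $A' = A$) sending the unit of $\cC$ to the regular $A$-bimodule $A$; the evaluation would be the relative tensor product functor $\cC_A \otimes_A {}_A\cC \to \cC$ sending $(M, N) \mapsto M \otimes_A N$. The triangle identities would then reduce to the standard isomorphisms $A \otimes_A N \simeq N$ and $M \otimes_A A \simeq M$.

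With dualizability in hand, the $\Hom$-formula would follow formally from the universal property of duals: for any $\cC$-module $\cN$,
$$\Hom_\cC(\cC_A, \cN) \simeq {}_A\cC \otimes_\cC \cN.$$
Specializing to $\cN = \cC_{A'}$ and invoking part (1) would then give $\Hom_\cC(\cC_A, \cC_{A'}) \simeq {}_A\cC_{A'}$.

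The main obstacle is navigating the non-symmetric monoidal setting: unlike in the symmetric case of \cite{BFN}, one must carefully track left versus right $\cC$-module structures, and the tensor product appearing in the evaluation ($\otimes_A$) differs in type from the one in the coevaluation ($\otimes_\cC$). A cleaner alternative to explicitly verifying the zigzag identities would be to re-apply Proposition \ref{algebras and monads} in a different guise: identify $\Hom_\cC(\cC_A, \cN)$ with modules over the monad ``$-\otimes A$'' on $\cN$ via monadic Barr-Beck (following the same pattern as the proof of that proposition), which delivers the Hom-formula and thereby the duality in a single step, bypassing the bookkeeping of triangle identities in the non-symmetric setting.
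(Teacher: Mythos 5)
Your proposal matches the paper's proof in both parts: part (1) is obtained by applying Proposition \ref{algebras and monads} to $\cM = \cC_{A'}$, and part (2) exhibits exactly the duality data the paper uses (coevaluation $V \mapsto A \otimes V$ viewed as the regular bimodule, evaluation the two-sided bar construction $M, N \mapsto M \otimes_A N$, with the triangle identity reducing to $A \otimes_A M \simeq M$). The one step you assert but the paper proves is the identification ${}_A(\cC_{A'}) \simeq {}_A\cC_{A'}$ of iterated module categories with bimodule categories, which in the $\infty$-categorical setting the paper justifies by producing a morphism of monads $G'G''F''F' \to GF$ and observing both have underlying functor $A \otimes (-) \otimes A'$, hence the morphism is an equivalence.
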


\begin{proof}
We apply Proposition \ref{algebras and monads} to the instance
where $\cM$ is the $\infty$-category of left modules over another
associative algebra $A'$ to conclude that there is a natural
equivalence ${}_A\cC\ot_\cC \cC_{A'} \simeq {}_A(\cC_{A'})$. We now
have a chain of adjunctions
$$ \xymatrix{\cC\ar@/^1pc/[r]^-{F'}&\cC_{A'}\ar@/^1pc/[r]^-{F''}
  \ar[l]^-{G'}&{}_A(\cC_{A'})\ar[l]^-{G''}\\}
$$ in which the composite $G'\circ G''$ is colimit preserving and
conservative, and hence satisfies the monadic Barr-Beck conditions.

Furthermore, the above adjunction naturally extends to a diagram in which the cycle of left adjoints (denoted
by bowed arrows), and hence
also the cycle of right adjoints (denoted by straight arrows),  commute
$$
\xymatrix{\ar@/_1pc/[dr]_-{F} \cC\ar@/^1pc/[rr]^-{F''F'} &&  {}_A(\cC_{A'})\ar[ll]^-{G'G''}
\ar@/^1pc/[dl]^-{f} 
\\
& \ar[ul]_-G {}_A\cC_{A'} \ar[ur]^-{g} &
}
$$
Here $F(-) =A_1\ot A_2 \ot -$ is the induction, $G$ is the forgetful functor,
$f$ is the natural functor factoring through ${}_A\cC\ot_\cC\cC_{A'}$,
and $g$ is its right adjoint. From this diagram, we obtain a morphism of monads
$$
\xymatrix{
G'G''F''F' \ar[r] &  G'G''gfF''F' \simeq GF.
}
$$

Now the underlying functors of the monads $GF(-)$ and $G'G''F''F'(-)$ are both equivalent to the tensor $A\ot (-) \ot A'$, so 
the above morphism of monads is an equivalence. Thus we obtain the promised equivalence
${}_A\cC\ot_\cC\cC_{A'} \simeq {}_A(\cC_{A'})\simeq {}_A\cC_{A'}$. 

Finally, we show that the $\infty$-category of left $A$-modules
${}_A\cC$ is a dualizable $\cC$-module by directly exhibiting the
$\infty$-category of right $A$-modules $\cC_{A}$ as its
dual. The trace map is given by the two-sided bar construction
$$
\tau:  \cC_A \ot_\cC
{}_A\cC \to \cC
\qquad
M, N \mapsto M\ot_A N
$$ 
The unit map  is given by the induction
$$ u:\m_k \to {}_A\cC \ot_\cC \cC_A \simeq {}_A\cC_A \qquad V \mapsto
A\ot V
$$
where we regard $A\ot c$ as an $A$-bimodule.

One can verify directly that the composition
$$
\xymatrix{\m_A(\cC)\ar[r]^-{{\rm id}\ot u}&\m_A(\cC)\ot_\cC \m_{A^{\rm op}}(\cC)\ot_\cC\m_A(\cC)
\ar[r]^-{\tau\ot{\rm id}} &\m_A(\cC)}
$$ 
is equivalent to the identity. First, $({\rm id}\ot u)(M)$ is equivalent to $A \ot M$ 
regarded as an $A\ot A^{\rm op} \ot A$-module, and
second, $(\tau\ot{\rm id})(A\ot M)$ is equivalent to $A\ot_A M \simeq M$.
\end{proof}



\begin{thebibliography}{99}

\bibitem[AG]{AG} D. Arinkin and D. Gaitsgory, Singular support of
  coherent sheaves, and the geometric Langlands conjecture.
  arXiv:1201.6343.

\bibitem[BFN]{BFN} D. Ben-Zvi, J. Francis, and D. Nadler, Integral
  transforms and Drinfeld centers in derived algebraic
  geometry. arXiv:0805.0157. J. Amer. Math. Soc. 23 (2010), 909-966. 

\bibitem[BN]{character} D. Ben-Zvi and D. Nadler, The character theory
  of a complex group.  e-print arXiv:math/0904.1247

\bibitem[Be]{Roma ICM} R. Bezrukavnikov, Noncommutative counterparts
  of the Springer resolution. arXiv:math.RT/0604445.  International
  Congress of Mathematicians. Vol. II, 1119--1144, Eur. Math. Soc.,
  Z\"urich, 2006.


\bibitem[G1]{indcoh} D. Gaitsgory, Notes on Geometric Langlands:
  ind-coherent sheaves. arXiv:1105.4857 

\bibitem[G2]{1affine} D. Gaitsgory, Sheaves of Categories over
  Prestacks. Preprint, 2012.

\bibitem[GKM]{GKM} M. Goresky, R. Kottwitz and R. MacPherson,
  Equivariant cohomology, Koszul duality, and the localization
  theorem.  Invent.  Math.  131 (1998), no. 1, 25--83.

\bibitem[L1]{topos} J. Lurie, Higher topos theory.
  arXiv:math.CT/0608040. Available at
  http://www.math.harvard.edu/\~{}lurie/

\bibitem[L2]{HA} J. Lurie, Higher Algebra. Available at
  http://www.math.harvard.edu/\~{}lurie/


\bibitem[L3]{jacob TFT} J. Lurie, On the classification of topological
  field theories.  Available at http://www.math.harvard.edu/\~{}lurie/
  Current developments in mathematics, 2008, 129¡V280, Int. Press,
  Somerville, MA, 2009.

\bibitem[L4]{dag8} J. Lurie, Derived Algebraic Geometry VIII:
  Quasicoherent sheaves and Tannaka duality theorems. Available at
  http://www.math.harvard.edu/\~{}lurie/

\bibitem[L5]{dag11} J. Lurie, Derived Algebraic Geometry XI: Descent
  theorems. Available at http://www.math.harvard.edu/\~{}lurie/


\bibitem[P]{preygel} A. Preygel, Thom-Sebastiani and Duality for Matrix
  Factorizations.  arXiv:1101.5834 


\end{thebibliography}
\end{document}